\documentclass{article}
\usepackage[nottoc]{tocbibind}
\usepackage[titletoc,title]{appendix}
\usepackage[utf8]{inputenc}

\usepackage[pdftex]{graphicx}
\usepackage[pdftex,dvipsnames,usenames]{color}

\usepackage{enumitem}
\usepackage{mathtools}

\usepackage{cancel}

\usepackage[english]{babel}
\usepackage[hyperindex,breaklinks]{hyperref}

\hypersetup{
  colorlinks   = true, 
  urlcolor     = blue, 
  linkcolor    = blue, 
  citecolor   = red 
}

\usepackage{amsmath,amssymb,amsfonts,mathrsfs, amsthm}

\usepackage{graphicx}
\usepackage[margin=0.75in]{geometry}

\usepackage{soul}

\usepackage{pdfpages}

\usepackage{lipsum}
\usepackage{csquotes}
\usepackage{thmtools}

\usepackage[
  backend=biber,
  style=alphabetic,
  maxnames=5,       
  minnames=5,       
  maxbibnames=5,    
  minbibnames=5,     
  maxalphanames=5,  
  minalphanames=5
]{biblatex}
\usepackage{amsfonts,amsmath,amstext,amssymb,amsthm}
\usepackage{enumitem}
\usepackage[english]{babel}
\usepackage{fancybox}
\usepackage{xspace}
\usepackage{amscd}
\usepackage[all]{xy}
\usepackage[svgnames]{xcolor}
\usepackage{calc}

\usepackage{esint}

\usepackage{enumitem}

\usepackage{cases}
\usepackage{eqnarray}

\usepackage[compact]{titlesec}

\usepackage{accents}

\def\XXint#1#2#3{{\setbox0=\hbox{$#1{#2#3}{\int}$ }
\vcenter{\hbox{$#2#3$ }}\kern-.56\wd0}}

\newcommand{\eps}{\varepsilon}

\newcommand{\N}{\mathbb{N}}
\newcommand{\R}{\mathbb{R}}

\newcommand{\ep}{\varepsilon}

\usepackage{cleveref}

\newtheorem{theorem}{Theorem}[section]

\newtheorem{lemma}[theorem]{Lemma}

\theoremstyle{definition}

\newtheorem{remark}[theorem]{Remark}

\usepackage{url}

\makeatletter
\renewenvironment{abstract}{
  \par\smallskip
  \noindent\textsc{Abstract.}\ %
}{
  \par\medskip
}
\makeatother

\makeatletter
\newcommand{\addressa}[1]{\gdef\@addressa{#1}}
\newcommand{\emaila}[1]{\gdef\@emaila{\url{#1}}}

\newcommand{\@endstuff}{\par\vspace{\baselineskip}\noindent
\begin{tabular}{@{}l}\scshape\@addressa\\\textit{E-mail address:} \@emaila\end{tabular} 

}

\AtEndDocument{\@endstuff}
\makeatother

\begin{document}

\setlength{\abovedisplayskip}{6pt}          
\setlength{\belowdisplayskip}{6pt}          
\setlength{\abovedisplayshortskip}{6pt}     
\setlength{\belowdisplayshortskip}{6pt}     

\setlength{\parskip}{0pt plus 1pt}
\setlength{\parindent}{15pt}
\setlength{\abovedisplayskip}{5pt plus 1pt minus 2pt}
\setlength{\belowdisplayskip}{5pt plus 1pt minus 2pt}

\setlist[itemize]{itemsep=2pt, topsep=3pt, partopsep=1pt, parsep=1pt}
\setlist[enumerate]{noitemsep, topsep=2pt, partopsep=0pt, parsep=0pt}

\titlespacing{\section}{0pt}{10pt plus 2pt minus 2pt}{6pt plus 1pt minus 1pt}
\titlespacing{\subsection}{0pt}{8pt plus 2pt minus 2pt}{5pt plus 1pt minus 1pt}

\title{\huge 
Equivalence of intrinsic and extrinsic area bounds for minimal surfaces}

\author{Enric Florit-Simon}
\date{}

\begingroup
\renewcommand\thefootnote{}
\footnotetext{E. F. was supported by the European Research Council under the Grant Agreement No 948029.}
\endgroup

\maketitle

\begin{abstract}
We show that intrinsic and extrinsic area density bounds are equivalent, with matching asymptotic values, for complete, connected, smooth minimal immersions $\Sigma^d\overset{i}{\hookrightarrow}\R^N$ of any dimension and codimension. Combining our results with a recent breakthrough by Bellettini \cite{Bel25}, we extend the Schoen--Simon--Yau curvature estimates \cite{SSY75} for smoothly immersed, two-sided, stable minimal hypersurfaces $\Sigma^n\overset{i}{\hookrightarrow}\R^{n+1}$ with bounded intrinsic area density to the missing case $n=6$, which had remained open since.
\end{abstract}

\addressa{Enric Florit-Simon \\ Department of Mathematics, ETH Z\"{u}rich \\ Rämistrasse 101, 8092 Zürich, Switzerland}
\emaila{enric.florit@math.ethz.ch}
\newcommand{\enric}[1]{{\color{green}{#1}}}

\section{Introduction}\label{sec:intro}

Let $\Sigma^d\overset{i}{\hookrightarrow}\R^N$ be an immersed minimal submanifold of dimension $d$.
Given $p\in\Sigma$ and $R>0$, define its \textit{intrinsic} and \textit{extrinsic area densities} via
$${\bf M}_R^{{\rm int}}(\Sigma,p):=\frac{1}{R^d}|B_R^\Sigma(p)|\qquad\mbox{and}\qquad {\bf M}_R^{{\rm ext}}(\Sigma,p):=\frac{1}{R^d}{\rm Area}(\Sigma \cap B_R^{\R^N}(p)):=\frac{1}{R^d}|i^{-1}(B_R^{\R^N}(p))|.$$
Here $B_R^\Sigma(p)=\{x\in\Sigma:d_\Sigma(x,p)<R\}$ denotes the intrinsic ball of radius $R$, and all lengths and areas on $\Sigma$ are computed with respect to the pullback metric $i^*(g_{\R^n})$. In particular, extrinsic area is ``counted with multiplicity'', as $i$ need not be injective, and it corresponds to the natural varifold measure associated to the immersion.\\

Our main result is the equivalence between intrinsic and extrinsic area density bounds for entire minimal immersions of any dimension and codimension:
\begin{theorem}\label{thm:extrbound}
    Let $\Sigma^d\overset{i}{\hookrightarrow} \R^N$ be a complete, connected, smoothly immersed minimal submanifold, and let $p\in\Sigma$. Then ${\bf M}_R^{\rm int}(\Sigma,p)$ and ${\bf M}_R^{\rm ext}(\Sigma,p)$, which are both monotone nondecreasing, satisfy
    $$\lim_{R\to\infty} {\bf M}_R^{\rm int}(\Sigma,p)=\lim_{R\to\infty} {\bf M}_R^{\rm ext}(\Sigma,p)\in[\omega_d,\infty],$$
    where $\omega_d:=|B_1^{\R^d}|$.

    In particular, $\Sigma$ has bounded intrinsic area density if and only if it has bounded extrinsic area density. In such a case, $\Sigma$ is additionally a proper immersion.
\end{theorem}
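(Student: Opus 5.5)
The plan is to prove the two monotonicity statements, then the easy inequality between the limits, then the reverse inequality, and finally properness. Throughout, $p$ is fixed, $r(x):=d_\Sigma(x,p)$ and $\rho(x):=|i(x)-i(p)|$. The basic pointwise fact is $\rho\le r$, since extrinsic distance never exceeds intrinsic distance.

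\textbf{Monotonicity.} Monotonicity of ${\bf M}^{\rm ext}_R$ and the bound ${\bf M}^{\rm ext}_R\ge\omega_d$ are the classical monotonicity formula for stationary varifolds, applied to the varifold of the immersion. For ${\bf M}^{\rm int}_R$ I would use the tangential position field $X=(i-i(p))^T$, for which ${\rm div}_\Sigma X=d$ by minimality. For a.e.\ $R$ the set $B^\Sigma_R(p)$ has rectifiable boundary, with $|\nabla r|=1$ a.e.; this is handled by the coarea formula, or by approximating $r$ by smooth functions to avoid the cut locus. The divergence theorem then gives
$$d\,|B^\Sigma_R|=\int_{\partial B^\Sigma_R}\langle X,\nu\rangle\le\int_{\partial B^\Sigma_R}\rho\le R\,|\partial B^\Sigma_R|=R\,\tfrac{d}{dR}|B^\Sigma_R|.$$
This is exactly $\frac{d}{dR}\big(R^{-d}|B^\Sigma_R|\big)\ge0$. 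Moreover, as $R\to0$ the ratio tends to $\omega_d$, so ${\bf M}^{\rm int}_R\ge\omega_d$. Keeping the discarded terms, one obtains the quantitative identity
$$ {\bf M}^{\rm int}_{R_2}-{\bf M}^{\rm int}_{R_1}\ge\int_{R_1}^{R_2} s^{-d-1}\int_{\partial B^\Sigma_s}(s-\rho)\,ds. $$
This controls the mass on which $\rho<(1-\delta)r$.

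\textbf{Easy inequality and properness.} Since $\rho\le r$, we have $B^\Sigma_R(p)\subset i^{-1}(B_R(i(p)))$. Hence ${\bf M}^{\rm int}_R\le{\bf M}^{\rm ext}_R$, and so $\lim{\bf M}^{\rm int}\le\lim{\bf M}^{\rm ext}$. For properness, suppose the extrinsic density is bounded but $i$ is not proper. Then there are points $q_k$ with $r(q_k)\to\infty$ and $\rho(q_k)$ bounded. Passing to a subsequence, the balls $B^\Sigma_1(q_k)$ can be taken pairwise disjoint. By intrinsic monotonicity at $q_k$, each has area at least $\omega_d$, and all of them lie in a fixed extrinsic ball. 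This contradicts finite extrinsic area.

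\textbf{Reverse inequality (main obstacle).} It remains to show $\lim{\bf M}^{\rm ext}\le L:=\lim{\bf M}^{\rm int}$ when $L<\infty$. The difficulty is sheets of $\Sigma$ that are intrinsically far from $p$ but extrinsically close to $i(p)$. I would handle this with the defect identity above and pigeonholing over scales. Fix $\delta>0$ and $K\gg1$. Because the total defect is at most $L-\omega_d<\infty$, for large $S$ one can find scales where the defect on $[S,K^2S]$ is tiny. Converting the boundary integral into a bulk integral via coarea, the mass of $\{x\in B^\Sigma_{K^2S}:\rho(x)<(1-\delta)r(x)\}$ is then $o(S^d)$ relative to these scales.

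Now split $i^{-1}(B_S)$ into three parts:
\begin{itemize}
\item points with $r\le S/(1-\delta)$, whose mass is at most $|B^\Sigma_{S/(1-\delta)}|\le L(1-\delta)^{-d}S^d$;
\item points with $S/(1-\delta)<r\le K^2S$, which are all in the bad set, hence have mass $o(S^d)$;
\item points with $r>K^2S$.
\end{itemize}
For the last group I would use extrinsic monotonicity centered at such points. Each such sheet carries extrinsic mass at least $\omega_d(KS)^d$ inside a region where $\rho\le (K+1)S\ll r$. That mass would belong to the bad set at a larger scale, which the pigeonholed scales exclude. I would iterate over a geometric sequence of scales so that only finitely many can be bad.

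Letting $\delta\to0$ gives ${\bf M}^{\rm ext}_S\le L+o(1)$ along a sequence $S\to\infty$. Monotonicity of ${\bf M}^{\rm ext}$ then yields the claim. Making the third group rigorous, so that far sheets are genuinely charged to the intrinsic defect at some scale, is the step I expect to require the most care.
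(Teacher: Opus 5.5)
\textbf{There is a genuine gap, in your third group.} Your intrinsic monotonicity argument, the defect inequality (the paper's formula with the $\nu$-term dropped), the easy inequality, and the properness argument under bounded \emph{extrinsic} density are all fine. The third group, however, carries the whole content of the reverse inequality, and your treatment of it does not close.

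A minor point first. The lower mass bound should come from intrinsic monotonicity on $B^\Sigma_{KS}(q)$, not from extrinsic monotonicity on $i^{-1}(B_{KS}(i(q)))$. The latter also counts the sheet through $p$, where $\rho\approx r$. It also presupposes stationarity of a varifold you do not yet know is locally finite.

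The main problem is scale. Set $T:=r(q)>K^2S$. The points of $B^\Sigma_{KS}(q)$ contribute only about $(KS)^d\cdot T/T^{d+1}=(KS/T)^d$ to the defect $\int (r-\rho)\,r^{-d-1}$. The bad-set bound at scale $T$ only says $|\{\rho<(1-\delta)r\}\cap(B^\Sigma_{KT}\setminus B^\Sigma_T)|\lesssim_{K,\delta}\eta(T)\,T^d$. Nothing bounds $T$ in terms of $S$, so a sheet that returns near $i(p)$ after a huge intrinsic excursion is excluded neither at scale $S$ nor at scale $T$. Pigeonholing a sequence of good scales cannot help, because $T$ need not be one of them.

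\textbf{The missing idea} is to make the defect produced by a single bad point scale-invariant. Take an intrinsic ball around $q$ of radius comparable to its own distance $r(q)$, not to $S$. Suppose $r(q)=T$ and $\rho(q)\le T/(1+\eps)$, with $\eps\in(0,1)$. Then every $y\in B^\Sigma_{\eps T/8}(q)$ has $r(y)-\rho(y)\ge \eps T/4$ and $T/2<r(y)<2T$. By the intrinsic lower density bound this ball has area at least $\omega_d(\eps T/8)^d$. Hence ${\bf M}^{\rm int}_{2T}-{\bf M}^{\rm int}_{T/2}\ge c(d)\,\eps^{d+1}$, independently of $T$.

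Since $L<\infty$ and ${\bf M}^{\rm int}$ is monotone, ${\bf M}^{\rm int}_{4R}-{\bf M}^{\rm int}_{R}\to0$ along \emph{all} $R\to\infty$; no pigeonholing is needed. So there is $R_0$ with $\rho\le r\le(1+\eps)\rho$ on all of $\Sigma\setminus B^\Sigma_{R_0}$. This pointwise chord-arc estimate is the paper's \Cref{lem:chordarc}. It empties your groups 2 and 3 outright, since $i^{-1}(B_R)\subset B^\Sigma_{(1+\eps)R}$ for $R\ge R_0$. That gives ${\bf M}^{\rm ext}_R\le(1+\eps)^d\,{\bf M}^{\rm int}_{(1+\eps)R}$ and properness at once, without first assuming extrinsic bounds.
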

\begin{remark}\label{rmk:trivbound}
    The inequality ${\bf M}_R^{\rm int}(\Sigma,p)\leq {\bf M}_R^{\rm ext}(\Sigma,p)$ is trivial and completely local, by the direct comparison $|p-q|\leq d_{\Sigma}(p,q)$ for any $p,q\in\Sigma\overset{i}{\hookrightarrow}\R^N$. Our proof consists in finding an asymptotic chord-arc estimate in the opposite direction, exploiting a monotonicity formula for the intrinsic area density.
\end{remark}
A main motivation behind this result is the lack of a general compactness theory for minimal immersions (or even embeddings) with intrinsic area bounds. This is in stark contrast with the case of extrinsically bounded area, where stationary integral varifolds---and their associated compactness theorem \cite{All72}---provide the main tool.

In fact, even for stable, codimension one minimal immersions $\Sigma^n\overset{i}{\hookrightarrow} \R^{n+1}$, this issue has not as of yet been resolved. This is exemplified by a well-known gap between the best known (optimal) dimensional requirement $n\leq 6$ for the flatness of entire, stable, codimension one minimal embeddings with extrinsic area bounds \cite{SS81} and that for immersions (or even embeddings) with only intrinsic area bounds \cite{SSY75}, in which the case $n=6$ has been missing since. In combination with the recent breakthrough \cite{Bel25}, which removes the embeddedness requirement for $n=6$, \Cref{thm:extrbound} resolves the dimensional gap:
\begin{theorem}\label{thm:SSYR7}
    Let $\Sigma^6\overset{i}{\hookrightarrow} \R^7$ be a complete, connected, smoothly immersed, two-sided stable minimal hypersurface. Assume that $\Sigma$ has bounded intrinsic area density, i.e. there are some $p\in\Sigma$ and $\Lambda\in(0,\infty)$ such that $|B_R^\Sigma(p)|\leq \Lambda R^6$ for all $R>0$. Then $\Sigma$ is a flat hyperplane.
\end{theorem}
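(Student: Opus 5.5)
We combine \Cref{thm:extrbound} with Bellettini's result \cite{Bel25}. The plan has three steps.

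\textbf{Step 1: pass to extrinsic bounds.} By hypothesis ${\bf M}_R^{\rm int}(\Sigma,p)\leq\Lambda$ for all $R>0$. Since this density is monotone, its limit as $R\to\infty$ is at most $\Lambda$. \Cref{thm:extrbound} then gives
$$\lim_{R\to\infty}{\bf M}_R^{\rm ext}(\Sigma,p)\leq\Lambda.$$
The extrinsic density is also monotone nondecreasing, by the classical monotonicity formula. Hence
$${\rm Area}(\Sigma\cap B_R^{\R^7}(p))\leq\Lambda R^6\quad\text{for all } R>0.$$
\Cref{thm:extrbound} also tells us that $i$ is a proper immersion. Consequently the pushforward of $\Sigma$ (counted with multiplicity) is a well-defined stationary integral $6$-varifold $V$ in $\R^7$ with Euclidean area growth. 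Moreover, $V$ is smoothly immersed and two-sided stable.

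\textbf{Step 2: apply \cite{Bel25}.} As I understand it, \cite{Bel25} proves the Schoen--Simon type sheeting and curvature estimates, and hence flatness, for stable minimal hypersurfaces in $\R^7$ with extrinsic area bounds. It removes the embeddedness assumption of \cite{SS81}, so that proper two-sided stable immersions are covered. I would invoke it as follows, with a blow-down argument as a fallback. Consider the rescalings $i_\lambda=\lambda^{-1}(i-i(p))$. Each has extrinsic density bounded by $\Lambda$ and is stable. By \cite{Bel25} their second fundamental forms satisfy the scale-invariant curvature estimate
$$|A_{i_\lambda}|(x)\,{\rm dist}\bigl(i_\lambda(x),\partial B_1\bigr)\leq C(\Lambda)\quad\text{on } i_\lambda^{-1}(B_1).$$
Undoing the scaling gives $|A|(x)\leq C/\lambda$ at all points with $|i(x)-i(p)|<\lambda/2$. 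Letting $\lambda\to\infty$ yields $A\equiv0$.

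\textbf{Step 3: conclude.} Since $A\equiv 0$ and $\Sigma$ is complete and connected, $i$ is a totally geodesic immersion. Its image therefore lies in a single hyperplane, and $i$ is a local isometry onto that hyperplane. A complete local isometry onto a simply connected flat space is a covering map, hence a diffeomorphism. Thus $\Sigma$ is a flat hyperplane. Alternatively, area growth bounded by $\Lambda R^6$ forces finitely many sheets, and connectedness then leaves exactly one.

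\textbf{Main obstacle.} The key point is checking that the hypotheses of \cite{Bel25} are met exactly: properness, extrinsic area bounds, two-sidedness, and the immersed setting. Properness and the extrinsic bound come directly from \Cref{thm:extrbound}, and the rest is hypothesis-matching. The substantive new input is entirely \Cref{thm:extrbound}, which converts the intrinsic assumption into the extrinsic one.
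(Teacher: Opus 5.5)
Your proposal is correct and follows the paper's route. The paper also uses \Cref{thm:extrbound} to get ${\rm Area}(\Sigma\cap B_R^{\R^7}(p))\leq\Lambda R^6$ for all $R>0$ together with properness, and then cites \cite[Theorem 1]{Bel25} directly for flatness; your blow-down fallback and the covering-map argument in Step 3 are valid but unnecessary extras.
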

For $n\geq 7$, \cite{BDGG69} provided a nonflat counterexample (in fact, an area-minimising boundary of a set of locally finite perimeter), asymptotic to Simons' cone \cite{Simons68}. In this sense, \Cref{thm:SSYR7} is sharp.

Local curvature estimates for stable minimal immersions will follow, completing the picture in \cite{SSY75}:
\begin{theorem}\label{thm:curvests}
    Let $\Sigma^6\overset{i}{\hookrightarrow} \R^7$ be a complete, connected, smoothly immersed, two-sided stable minimal hypersurface with boundary $\partial \Sigma$. Assume that $d_\Sigma(p,\partial \Sigma)>R$ and $|B_R^\Sigma(p)|\leq \Lambda R^6$ for some $p\in\Sigma$ and $\Lambda\,,R>0$. Then, 
    $$|\mathrm{II}_\Sigma|\leq \frac{C}{R}\qquad \mbox{in}\quad B_{R/2}^\Sigma(p)$$
    for some $C=C(\Lambda)>0$.
\end{theorem}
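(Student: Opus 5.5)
We argue by contradiction and compactness, with \Cref{thm:SSYR7} supplying the rigidity input, following the blow-up scheme of \cite{SSY75}. By scaling we take $R=1$. Suppose the estimate fails: there are two-sided stable minimal immersions $\Sigma_k$ with points $p_k$, $d_{\Sigma_k}(p_k,\partial\Sigma_k)>1$ and $|B_1^{\Sigma_k}(p_k)|\le\Lambda$, but $\sup_{B_{1/2}^{\Sigma_k}(p_k)}|\mathrm{II}|\to\infty$.

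\textbf{Point picking.} Consider $F_k(x)=d_{\Sigma_k}(x,\partial B_{3/4}^{\Sigma_k}(p_k))\,|\mathrm{II}_{\Sigma_k}(x)|$ on $B_{3/4}^{\Sigma_k}(p_k)$. This function is continuous, and its maximum is attained because closed intrinsic balls at distance less than the distance to $\partial\Sigma$ are compact, by completeness and Hopf--Rinow. Let $x_k$ be a maximum point, set $\lambda_k=|\mathrm{II}(x_k)|$ and $\delta_k=d(x_k,\partial B_{3/4})$. Then $F_k(x_k)\ge \tfrac14\sup_{B_{1/2}}|\mathrm{II}|\to\infty$. The standard doubling argument gives $|\mathrm{II}|\le 2\lambda_k$ on $B^{\Sigma_k}_{\delta_k/2}(x_k)$. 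Rescale intrinsically by $\lambda_k$ around $x_k$ and translate $i_k(x_k)$ to the origin. The rescaled surfaces $\tilde\Sigma_k$ satisfy $|\mathrm{II}(x_k)|=1$ and $|\mathrm{II}|\le2$ on intrinsic balls of radius $\lambda_k\delta_k/2\to\infty$. They remain stable and two-sided.

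\textbf{Area control after rescaling.} This is the step where \Cref{thm:extrbound} and the intrinsic monotonicity it rests on enter, and I expect it to be the main obstacle. We need $|B_r^{\tilde\Sigma_k}(x_k)|\le C\Lambda r^6$ for all $r\le\lambda_k\delta_k/2$. This amounts to $|B_s^{\Sigma_k}(x_k)|\le C\Lambda s^6$ for $s\lesssim\delta_k$.

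At the scale $s\sim 1/4$ this follows from $B_{1/4}(x_k)\subset B_1(p_k)$. To pass to smaller scales we use the monotonicity of ${\bf M}^{\rm int}_s(\Sigma,x)$ in $s$, which is exactly the monotonicity formula for the intrinsic area density invoked in Remark~\ref{rmk:trivbound}. The paper's proof of \Cref{thm:extrbound} should establish it as a local statement, valid for $s<d(x,\partial\Sigma)$. If only a global version is available, I would redo the argument locally; the computation is local (integrate $\Delta|x|^2=2d$ against cutoffs of the intrinsic distance) and should survive. This gives ${\bf M}^{\rm int}_s(x_k)\le 4^6\Lambda$ for all $s\le 1/4$, hence the required uniform bound.

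\textbf{Compactness.} Bounded curvature $|\mathrm{II}|\le2$ gives uniform graphical radius. Together with intrinsic completeness on growing balls, this lets the pointed immersions $(\tilde\Sigma_k,x_k)$ subconverge smoothly, as immersions of abstract pointed manifolds (Cheeger--Gromov style, with the immersions converging in $C^\infty_{\rm loc}$), to a complete connected minimal immersion $\Sigma_\infty^6\hookrightarrow\R^7$. The limit is two-sided, since the unit normals converge. It is stable, since the stability inequality passes to the limit for compactly supported test functions pulled back through the convergence diffeomorphisms. Its intrinsic area density is bounded by $C\Lambda$, and $|\mathrm{II}(x_\infty)|=1$. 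Working intrinsically avoids any issue with multiplicity or self-intersections of the immersion.

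\textbf{Conclusion.} By \Cref{thm:SSYR7}, $\Sigma_\infty$ is a flat hyperplane. This contradicts $|\mathrm{II}(x_\infty)|=1$, which proves the estimate.
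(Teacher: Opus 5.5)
Your proposal is correct and follows the paper's proof essentially step for step: the same point-picking via ${\rm dist}_{\Sigma_k}(y,\partial B_{3/4}^{\Sigma_k})\,|\mathrm{II}_{\Sigma_k}(y)|$, intrinsic rescaling around the maximiser, uniform area control from $B_{1/4}^{\Sigma_k}(y_k)\subset B_1^{\Sigma_k}$ combined with intrinsic monotonicity at smaller scales, smooth subconvergence of the immersions to a complete two-sided stable limit, and \Cref{thm:SSYR7} applied to that limit. Your concern about locality does not arise, since \Cref{lem:monformula} is already stated for submanifolds with boundary and for radii below ${\rm dist}_\Sigma(0,\partial\Sigma)$, which is exactly the recentred local version you need.
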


Let us recall the historical progress towards this result. Let $\Sigma^{n}\overset{i}{\hookrightarrow} \R^{n+1}$ be as in \Cref{thm:curvests}.
\begin{itemize}
    \item For $n\leq 5$, \cite{SSY75} proved the exact analogue result.
    \item For $n=6$, under the additional assumption that $\Sigma$ be an area-minimising boundary, the local curvature estimates were obtained by \cite{Simon76}.
    \item For $n = 6$, under the additional assumptions of embeddedness and a uniform bound on the extrinsic area density, they were obtained in \cite{SS81}.
    \item Much more recently, for $n = 6$ \cite{Bel25} removed the embeddedness condition, obtaining the local curvature estimates under the only additional assumption of a uniform bound on the extrinsic area density.
\end{itemize}
Interestingly, unlike \Cref{thm:SSYR7}, the local curvature estimates in \Cref{thm:curvests} do not follow directly from the ones in \cite{Bel25}, as the comparability between intrinsic and extrinsic area densities in \Cref{thm:extrbound} is exclusively global. We will reduce \Cref{thm:curvests} to \Cref{thm:SSYR7} instead, by combining a well-known scaling argument and the monotonicity of intrinsic area density.
\begin{remark}
    The well-known ``stable Bernstein conjecture'' asks whether the same results hold even without any assumption on the area of $\Sigma$. This was shown to be true in \cite{FS80, DP79, Pog81} for $n=2$, and for $n=3,4,5$ in very recent breakthroughs \cite{CL24, CLMS25, Maz24} (see also \cite{CL23, CMR24, CCMMA26, Str26}). A main step in several of the proofs, particularly in higher dimensions, is to show that any complete, connected, smoothly immersed, two-sided stable minimal hypersurface $\Sigma^n\overset{i}{\hookrightarrow}\R^{n+1}$ satisfies intrinsic area bounds indeed, thus reducing their classification as flat hyperplanes to the main result in \cite{SSY75}. \Cref{thm:SSYR7} suggests this as a viable strategy also for $n=6$, removing a well-known obstruction in the missing case.
\end{remark}
\begin{remark}
    In combination with \cite{Bel25}, this also provides a new proof of the results in \cite{SSY75} for $\Sigma^n\overset{i}{\hookrightarrow} \R^{n+1}$ with $n\leq 5$ (either by running the same arguments, or simply considering $\Sigma^n\times\R^{6-n}\overset{i}{\hookrightarrow} \R^{7}$ in \Cref{thm:SSYR7,thm:curvests}).
\end{remark}
\section{Proofs}\label{sec:proofthm}
In what follows we always assume completeness, connectedness and smoothness of any $\Sigma^d\overset{i}{\hookrightarrow}\R^N$. We work in arbitrary dimension and codimension, and we do not require stability unless otherwise indicated. The notation ``$0\in\Sigma$'' will mean that we fix some point $o\in\Sigma$, denoted by $0$ by slight abuse of notation, such that  $i(o)=0$. Having fixed such a choice, we denote $r_\Sigma(x):={\rm dist}_\Sigma(x,o)$, ${\bf M}_R^{\rm int}(\Sigma):={\bf M}_R^{\rm int}(\Sigma,o)$, and $B_R^\Sigma:=B_R^\Sigma(o)$.

The main ingredient in the proof is a monotonicity formula for the intrinsic area density (motivated by a computation in \cite[Section 7]{Yau75} to obtain lower volume estimates for minimal submanifolds in nonpositive curvature, which we learnt from \cite[Lemma 8.9]{Cho21notes}). Although likely folklore, and recently noted in the literature (see \cite[Proof of Proposition 2.4]{MP24}), it seems to have received significantly less attention than its extrinsic counterpart; in particular, we have not found a reference for its simple (albeit powerful) error term.
\begin{lemma}[Monotonicity formula]\label{lem:monformula}
    Let $\Sigma^d\overset{i}{\hookrightarrow}\R^N$ be an immersed submanifold with boundary $\partial\Sigma$ and $0\in\Sigma$. Let ${\rm H}_\Sigma$ denote its mean curvature vector. Then, given $0<R_1<R_2<{\rm dist}_\Sigma(0,\partial \Sigma)$, we have
    \begin{equation}\label{eq:monfor}
        {\bf M}_{R_2}^{\rm int}(\Sigma)-{\bf M}_{R_1}^{\rm int}(\Sigma)=
\int_{B_{R_2}^\Sigma\setminus B_{R_1}^\Sigma} \frac{(r_\Sigma(x)-|x|)+\frac{|x|}{2}(\frac{x}{|x|}-\nu_{B_{r_\Sigma(x)}^\Sigma}(x))^2}{r_\Sigma^{d+1}(x)}\,dx+\int_{R_1}^{R_2}\int_{B_{t}^\Sigma} \frac{x\cdot{\rm H}_\Sigma(x)}{t^{d+1}}\,dx\,dt\,.
\end{equation}
\end{lemma}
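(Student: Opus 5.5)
The plan is to test the first variation identity for the position vector field against a Lipschitz cutoff in the intrinsic distance $r_\Sigma$. This avoids any regularity issues for $\partial B_t^\Sigma$. We then rewrite the resulting boundary term pointwise and integrate in $t$. Throughout, $\nu_{B^\Sigma_{r_\Sigma(x)}}(x)$ is interpreted as $\nabla^\Sigma r_\Sigma(x)$. This is defined and has unit length at every point off the cut locus of $o$ together with $o$ itself, which is a closed set of measure zero, and there $r_\Sigma$ is smooth. We use the convention $\Delta_\Sigma x={\rm H}_\Sigma$, so that ${\rm div}_\Sigma(x^T)=d+x\cdot{\rm H}_\Sigma$, where $x^T$ is the tangential projection of the position vector.

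\emph{Step 1 (first variation with a radial cutoff).} For a Lipschitz, compactly supported $\varphi:[0,\infty)\to\R$ with support in $[0,{\rm dist}_\Sigma(0,\partial\Sigma))$, the function $\varphi(r_\Sigma)$ is Lipschitz with compact support away from $\partial\Sigma$. Integrating by parts gives
$$\int_\Sigma \varphi(r_\Sigma)\,(d+x\cdot{\rm H}_\Sigma)\,dx=-\int_\Sigma\varphi'(r_\Sigma)\,x\cdot\nabla^\Sigma r_\Sigma\,dx .$$
We choose $\varphi$ equal to $1$ on $[0,t]$, linear down to $0$ on $[t,t+\varepsilon]$, and $0$ afterwards. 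Write $V(t):=|B_t^\Sigma|$. Since $|\nabla^\Sigma r_\Sigma|=1$ a.e., the coarea formula shows that $V$ is locally absolutely continuous. Letting $\varepsilon\to0$ then yields, for a.e.\ $t$,
$$d\,V(t)+\int_{B_t^\Sigma}x\cdot{\rm H}_\Sigma=\int_{\partial B_t^\Sigma}x\cdot\nu,\qquad V'(t)=\mathcal H^{d-1}(\partial B_t^\Sigma)=\int_{\partial B_t^\Sigma}1 .$$
Here $\partial B_t^\Sigma$ denotes the level set $\{r_\Sigma=t\}$ and $\nu=\nabla^\Sigma r_\Sigma$.

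\emph{Step 2 (pointwise algebra).} For unit $\nu$ and $x\neq0$ we have $|\tfrac{x}{|x|}-\nu|^2=2-2\tfrac{x\cdot\nu}{|x|}$. Hence, on $\{r_\Sigma=t\}$,
$$x\cdot\nu=t-E(x),\qquad E(x):=(r_\Sigma(x)-|x|)+\tfrac{|x|}{2}\Big|\tfrac{x}{|x|}-\nu\Big|^2,$$
and the identity persists at $x=0$ with the second term read as $0$. Substituting into Step 1 gives
$$t\,V'(t)-d\,V(t)=\int_{\partial B_t^\Sigma}E+\int_{B_t^\Sigma}x\cdot{\rm H}_\Sigma .$$
Dividing by $t^{d+1}$ gives $\frac{d}{dt}\big(V(t)/t^d\big)$ on the left for a.e.\ $t$.

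\emph{Step 3 (integration).} We integrate from $R_1$ to $R_2$, which is legitimate because $V/t^d$ is absolutely continuous on $[R_1,R_2]$. The coarea formula, again with $|\nabla^\Sigma r_\Sigma|=1$ a.e., converts
$$\int_{R_1}^{R_2}\int_{\partial B_t^\Sigma}\frac{E}{t^{d+1}}\,dt\quad\text{into}\quad\int_{B_{R_2}^\Sigma\setminus B_{R_1}^\Sigma}\frac{E}{r_\Sigma^{d+1}}\,dx .$$
This is exactly \eqref{eq:monfor}. As a sanity check, $E\geq0$ since $|x|\le r_\Sigma(x)$, consistent with Remark~\ref{rmk:trivbound}.

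The main obstacle is the low regularity of $r_\Sigma$. It is only Lipschitz, the spheres $\partial B_t^\Sigma$ need not be smooth, and the cut locus must be handled. The cutoff formulation of Step 1 is designed precisely to avoid applying the divergence theorem on geodesic balls. The remaining care is to justify the limit $\varepsilon\to0$ at a.e.\ $t$, which follows from Lebesgue differentiation of the coarea integrals, and the absolute continuity of $V$.
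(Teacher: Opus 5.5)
Your proof is correct and is essentially the paper's argument. Both integrate ${\rm div}_\Sigma(x^T)=\frac12\Delta_\Sigma|x|^2=d+x\cdot{\rm H}_\Sigma$ over $B_t^\Sigma$, write $x\cdot\nu=t-E(x)$ on $\partial B_t^\Sigma$, recognise $t^{-d-1}(tV'-dV)=\frac{d}{dt}(t^{-d}V)$, and integrate in $t$ via the coarea formula.

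The only difference is how the boundary identity is obtained. The paper applies the divergence theorem directly on $B_t^\Sigma$. You instead use a Lipschitz radial cutoff together with Lebesgue differentiation, which properly justifies the identity for a.e.\ $t$ despite the cut locus and possibly nonsmooth geodesic spheres, a point the paper passes over.
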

In particular, if $\Sigma$ is minimal then ${\bf M}_R^{\rm int}(\Sigma)$ is monotone nondecreasing in $R\in(0,{\rm dist}_\Sigma(0,\partial \Sigma))$.
\begin{proof}
    A simple computation shows that $\Delta_\Sigma |x|^2=2d+ 2x\cdot{\rm H}_\Sigma$. Integrating and using the divergence theorem, letting $\nu_{B_r^\Sigma}$ denote the exterior intrinsic normal vector to the ball we find
    \begin{align*}
    d\,|B_R^\Sigma|+\int_{B_R^\Sigma}x\cdot{\rm H}_\Sigma&=\frac{1}{2}\int_{B_R^\Sigma} \Delta_\Sigma |x|^2=\int_{\partial B_R^\Sigma} x\cdot \nu_{B_R^\Sigma}(x).
        \end{align*}
        Writing
        $$x\cdot \nu_{B_R^\Sigma}(x)=R-\big[(R-|x|)+(|x|-x\cdot\nu_{B_R^\Sigma}(x))\big]=R-\big[(R-|x|)+\frac{|x|}{2}(\frac{x}{|x|}-\nu_{B_R^\Sigma}(x))^2\big]$$
        we find
        \begin{align*}
        d\,|B_R^\Sigma|+\int_{B_R^\Sigma}x\cdot{\rm H}_\Sigma&=R|\partial B_R^\Sigma|-\int_{\partial B_R^\Sigma} \big[(R-|x|)+\frac{|x|}{2}(\frac{x}{|x|}-\nu_{B_R^\Sigma}(x))^2\big]dx\,.
    \end{align*}
    Since
    $$R^{d+1}\frac{d}{dR}(R^{-d}|B_R^\Sigma|)=R|\partial B_R^\Sigma|-d|B_R^\Sigma|,$$
    dividing both sides by $R^{d+1}$, using that $R=r_\Sigma(x)$ for every $x\in\partial B_R^\Sigma$, and
    integrating we get \eqref{eq:monfor}.
    
    If $\Sigma$ is minimal (and thus ${\rm H_\Sigma}\equiv 0$) the right-hand side in \eqref{eq:monfor} is nonnegative, since $r_\Sigma(x)\geq |x|$ by \Cref{rmk:trivbound}, hence ${\bf M}_R^{\rm int}(\Sigma)$ is then monotone nondecreasing in $R$.
\end{proof}
\begin{remark}\label{rmk:lowdens}
     In particular, if $\Sigma$ is minimal, recentering \Cref{lem:monformula} and sending $R_1\to 0$ we get the lower density bound
    $${\bf M}_R^{\rm int}(\Sigma,p)\geq |B_1^{\R^d}|=\omega_d\qquad \forall\, p\in \Sigma\quad\mbox{and}\quad R\in(0,{\rm dist}_\Sigma(0,\partial\Sigma)).$$
\end{remark}
\begin{lemma}[Density pinching implies chord-arc estimate]\label{lem:chordarc}
    Let $\Sigma^d\overset{i}{\hookrightarrow}\R^N$ be an immersed minimal submanifold with $0\in\Sigma$. Given $\eps\in(0,1)$, there exists $\delta=\delta(\eps,d)>0$ such that: If ${\bf M}_{8R}^{\rm int}(\Sigma)-{\bf M}_R^{\rm int}(\Sigma)\leq \delta$, then 
    $$|x|\leq r_\Sigma(x)\leq (1+\ep)|x|\qquad \mbox{in} \quad B_{4R}^\Sigma\setminus B_{2R}^\Sigma.$$
\end{lemma}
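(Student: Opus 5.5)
The plan is to turn the integral smallness given by \Cref{lem:monformula} into a pointwise bound, using a Lipschitz argument together with the lower density bound of \Cref{rmk:lowdens}. The inequality $|x|\leq r_\Sigma(x)$ is \Cref{rmk:trivbound}, so only the upper bound needs proof. Since $\Sigma$ is minimal, \eqref{eq:monfor} with $R_1=R$, $R_2=8R$ has nonnegative integrand and no mean curvature term. Discarding the squared term, I get
$$\int_{B_{8R}^\Sigma\setminus B_{R}^\Sigma}\frac{r_\Sigma(x)-|x|}{r_\Sigma^{d+1}(x)}\,dx\leq\delta .$$
I will tacitly assume, as in \Cref{lem:monformula}, that $8R<{\rm dist}_\Sigma(0,\partial\Sigma)$ (automatic if $\Sigma$ is complete without boundary).

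Set $f(x):=r_\Sigma(x)-|x|\geq 0$. Both $r_\Sigma$ and $x\mapsto|i(x)|$ are $1$-Lipschitz with respect to $d_\Sigma$; the latter because $|i(x)-i(y)|\leq d_\Sigma(x,y)$. Hence $f$ is $2$-Lipschitz on $(\Sigma,d_\Sigma)$.

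Suppose the conclusion fails at some $x_0\in B_{4R}^\Sigma\setminus B_{2R}^\Sigma$, so $r_\Sigma(x_0)>(1+\eps)|x_0|$. Then
$$f(x_0)>\frac{\eps}{1+\eps}\,r_\Sigma(x_0)\geq \frac{\eps}{2}\cdot 2R=\eps R .$$
Take $\rho:=\eps R/4$. On $B_\rho^\Sigma(x_0)$ the Lipschitz bound gives $f\geq \eps R-2\rho=\eps R/2$. Since $\rho<R/4$, the triangle inequality places this ball inside $B_{8R}^\Sigma\setminus B_R^\Sigma$, and in fact inside $B_{5R}^\Sigma\setminus B_{R}^\Sigma$. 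On the ball we also have $r_\Sigma\leq 8R$.

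Now apply \Cref{rmk:lowdens} centred at $x_0$. This is legitimate because ${\rm dist}_\Sigma(x_0,\partial\Sigma)>8R-4R>\rho$, and it gives $|B_\rho^\Sigma(x_0)|\geq\omega_d\rho^d$. Therefore
$$\delta\geq \int_{B_\rho^\Sigma(x_0)}\frac{f}{r_\Sigma^{d+1}}\geq \frac{\eps R/2}{(8R)^{d+1}}\,\omega_d\Big(\frac{\eps R}{4}\Big)^d = c(d)\,\eps^{d+1},$$
with $c(d)=\omega_d/(2\cdot 8^{d+1}4^d)$. Choosing $\delta(\eps,d):=\tfrac12 c(d)\eps^{d+1}$ gives a contradiction. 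Note that the powers of $R$ cancel exactly, which is what makes $\delta$ scale-invariant.

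The only step needing care is localising the integral bound. The integrand is small only on average, so we need a ball of definite size on which $f$ stays large and whose area is bounded below. The Lipschitz property supplies the first, and the intrinsic lower density bound supplies the second. The latter must be the intrinsic one: the Lipschitz estimate only controls $f$ on intrinsic balls. I expect no further obstacle beyond tracking the boundary distance when recentring.
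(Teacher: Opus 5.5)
Your proof is correct and is essentially the paper's argument. The paper likewise rescales to $R=1$ and uses the triangle inequality (your $2$-Lipschitz bound on $r_\Sigma-|x|$) to keep $r_\Sigma-|x|\gtrsim\eps$ on an intrinsic ball of radius comparable to $\eps$ around the bad point, then combines the nonnegative integrand of \eqref{eq:monfor} with the intrinsic lower density bound of \Cref{rmk:lowdens} to get $\delta\gtrsim_d\eps^{d+1}$; the only difference is the choice of constants (radius $\frac{\eps}{2(1+\eps)}$ versus your $\eps R/4$).
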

\begin{proof}
By scaling invariance we can assume that $R=1$.

    The inequality $|x|\leq r_\Sigma(x)$ is trivial (recall \Cref{rmk:trivbound}). Assume, for contradiction, that there were $x\in B_{4}^\Sigma\setminus B_{2}^\Sigma$ with $r_\Sigma(x)\geq (1+\eps)|x|$, thus with $r_\Sigma(x)-|x|\geq \frac{\eps}{1+\eps}r_\Sigma(x)\geq \frac{2\eps}{1+\eps}$. By the triangle inequality (both intrinsic and Euclidean), for every $y\in B_{\frac{\eps}{2(1+\eps)}}^\Sigma(x)\subset B_8^\Sigma\setminus B_1^\Sigma$ we still have
    \begin{equation}\label{eq:81tguqwp}
        r_\Sigma(y)-|y|\geq \big(r_\Sigma(x)-\frac{\eps}{2(1+\eps)}\big)-\big(|x|+\frac{\eps}{2(1+\eps)}\big)\geq \frac{\eps}{(1+\eps)}.
    \end{equation}
    On the other hand, \eqref{eq:monfor} shows that
    $$
    {\bf M}_8^{\rm int}(\Sigma)-{\bf M}_1^{\rm int}(\Sigma)\geq \int_{B_8^\Sigma\setminus B_1^\Sigma} \frac{r_\Sigma(y)-|y|}{r_\Sigma^{d+1}(y)}\,dy\geq \frac{1}{8^{d+1}}\int_{B_{\frac{\eps}{2(1+\eps)}}^\Sigma(x)} (r_\Sigma(y)-|y|)\,dy\,,
    $$
    thus combined with \eqref{eq:81tguqwp} we find
    $$
    {\bf M}_8^{\rm int}(\Sigma)-{\bf M}_1^{\rm int}(\Sigma) \geq \frac{\eps}{8^{d+1}(1+\eps)}|B_{\frac{\eps}{2(1+\eps)}}^\Sigma(x)|.
    $$
    By \Cref{rmk:lowdens} we can bound $|B_{\frac{\eps}{2(1+\eps)}}^\Sigma(x)|\geq \frac{\eps^d}{2^d(1+\eps)^d}|B_1^{\R^d}|$, thus
    $$
    {\bf M}_8^{\rm int}(\Sigma)-{\bf M}_1^{\rm int}(\Sigma) \geq \frac{\eps^{d+1}}{2^{4d+3}(1+\eps)^d}|B_1^{\R^d}|,
    $$
    reaching a contradiction for $\delta$ smaller than this value.
\end{proof}
We can now give:
\begin{proof}[Proof of \Cref{thm:extrbound}]
The monotonicity of ${\bf M}_R^{\rm ext}(\Sigma)$ is well known \cite{All72}, and \Cref{lem:monformula} (see also \cite[Proof of Proposition 2.4]{MP24}) proves the monotonicity of ${\bf M}_R^{\rm int}(\Sigma)$. In particular, the existence of both limits as $R\to\infty$ (allowing for the value $+\infty$) follows. From ${\bf M}_R^{\rm int}(\Sigma)\leq {\bf M}_R^{\rm ext}(\Sigma)$ (recall \Cref{rmk:trivbound}) and ${\bf M}_R^{\rm int}(\Sigma)\geq \omega_d$ (recall \Cref{rmk:lowdens}), we also find 
$$\omega_d\leq \lim_{R\to\infty} {\bf M}_R^{\rm int}(\Sigma)\leq \lim_{R\to\infty} {\bf M}_R^{\rm ext}(\Sigma).$$
If $\lim_{R\to\infty}{\bf M}_R^{\rm int}(\Sigma)=\infty$, there is nothing left to prove; assume then that $\lim_{R\to \infty} {\bf M}_R^{\rm int}(\Sigma)<\infty$ instead. Let $\eps\in(0,1)$, and let $\delta=\delta(d,\eps)>0$ be given by \Cref{lem:chordarc}. Pick $R_0=R_0(\eps,\Sigma)>0$ such that $\lim_{R\to\infty} {\bf M}_{R}^{\rm int}(\Sigma)-{\bf M}_{R_0}^{\rm int}(\Sigma)\leq \delta$; by monotonicity, for every $R>R_0$ we have
${\bf M}_{8R}^{\rm int}(\Sigma)-{\bf M}_{R}^{\rm int}(\Sigma)\leq \delta$ as well. \Cref{lem:chordarc}---applied for
every $R>R_0$---shows then that $|x|\leq r_\Sigma(x)\leq (1+\eps)|x|$ for every $x\in \Sigma\setminus B_{2R_0}^\Sigma$, thus
$B_{R}^\Sigma\subset i^{-1}(B_{R}^{\R^N}) \subset B_{(1+\eps)R}^\Sigma$ for every $R\geq 2R_0$. Properness of the immersion follows (say, fixing the value $\ep=1/2$), as the preimage under $i:\Sigma\hookrightarrow\R^n$ of any compact set of $\R^n$ is then contained in some intrinsic ball, hence it is also compact. Moreover, we deduce that 
$${\bf M}_{R}^{\rm ext}(\Sigma)=R^{-d}|i^{-1}(B_{R}^{\R^N})|\leq R^{-d}|B_{(1+\eps)R}^\Sigma|= (1+\eps)^d{\bf M}_{(1+\eps)R}^{\rm int}(\Sigma),$$
thus
$$\lim_{R\to\infty}{\bf M}_{R}^{\rm ext}(\Sigma)\leq (1+\eps)^d\lim_{R\to\infty}{\bf M}_{R}^{\rm int}(\Sigma).$$
Since $\eps\in(0,1)$ was arbitrary, sending $\eps\to 0$ concludes the proof.
\end{proof}

\begin{proof}[Proof of \Cref{thm:SSYR7}]
By \Cref{thm:extrbound}, $\Sigma$ also has bounded extrinsic area density, with ${\rm Area}(\Sigma\cap B_R^{\R^7}(p))\leq \Lambda R^6$ for every $R>0$ as well, and it is properly immersed.
\cite[Theorem 1]{Bel25} then shows that $\Sigma$ is a flat hyperplane.
\end{proof}
\begin{proof}[Proof of \Cref{thm:curvests}]
By scaling invariance, we can assume that $R=1$.

Assume, for contradiction, that the result were false. There would then be a sequence of immersions $\Sigma_k^6\overset{i_k}{\hookrightarrow} \R^7$ as in the statement, with---after a translation---$0\in\Sigma_k$, and points $x_k\in B_{1/2}(\Sigma_k)$ such that $|\mathrm{II}_{\Sigma_k}(x_k)|\to\infty$.
In particular,
$$\max_{y\in B_{3/4}^{\Sigma_k}}{\rm dist}_{\Sigma_k}(y,\partial B_{3/4}^{\Sigma_k})|\mathrm{II}_{\Sigma_k}(y)|\to\infty.$$
Let $y_k\in B_{3/4}^{\Sigma_k}$ with
$${\rm dist}_{\Sigma_k}(y_k,\partial B_{3/4}^{\Sigma_k})|\mathrm{II}_{\Sigma_k}(y_k)|=\max_{y\in B_{3/4}^{\Sigma_k}}{\rm dist}_{\Sigma_k}(y,\partial B_{3/4}^{\Sigma_k})|\mathrm{II}_{\Sigma_k}(y)|\to\infty,$$
and put $R_k:={\rm dist}_{\Sigma_k}(y_k,\partial B_{3/4}^{\Sigma_k})|\mathrm{II}_{\Sigma_k}(y_k)|\to\infty$ and $\lambda_k:=|\mathrm{II}_{\Sigma_k}(y_k)|$. Consider the ``translated and rescaled'' immersion $\widetilde \Sigma_k\overset{\widetilde i_k}{\hookrightarrow} \R^7$, given by the same underlying manifold $\Sigma_k$ with new metric $g_{\widetilde \Sigma_k}=\lambda_k^2g_{\Sigma_k}$, and new immersion map $\widetilde i_k(x):=i_k(x)-i_k(y_k)$ (so that $\widetilde i_k(y_k)=0$). It follows that
$$|\mathrm{II}_{\widetilde \Sigma_k}(y_k)|=1,\quad {\rm dist}_{\widetilde \Sigma_k}(y_k,\partial \widetilde \Sigma_k)\geq R_k\to\infty,\quad \mbox{and}\quad |\mathrm{II}_{\widetilde \Sigma_k}(y)|\leq 2 \ \mbox{ in } \ B_{R_k/2}^{\widetilde \Sigma_k}(y_k).$$
Moreover, by the assumption $|B_1^{\Sigma_k}|\leq \Lambda$ and the intrinsic monotonicity formula it follows that
$$r^{-6}|B_r^{\Sigma_k}(y_k)|\leq (1/4)^{-6}|B_{1/4}^{\Sigma_k}(y_k)|\leq (1/4)^{-6}\Lambda\quad \mbox{for every} \quad r\in(0,{\rm dist}_{\Sigma_k}(y_k,\partial B_{3/4}^{\Sigma_k})),$$
which upon rescaling gives $r^{-6}|B_r^{\widetilde \Sigma_k}(y_k)|\leq (1/4)^{-6}\Lambda$ for every $r\in (0,R_k)$. In particular, given $M>0$, for $k$ large enough so that $R_k>M$ we have $|B_M^{\widetilde \Sigma_k}(y_k)|\leq (1/4)^{-6}\Lambda M^6$, a uniform bound.\\
Finally, the uniform second fundamental form estimates and standard bootstrap estimates for minimal graphs give some universal $r_0>0$ such that: for any $p\in B_{R_k/4}^{\widetilde \Sigma_k}(y_k)$, we can write $B_{r_0}^{\widetilde \Sigma_k}(p)$ as an extrinsic graph with uniform $C^{l}$ estimates for every $l\in\N$.

Combining all of the above, the standard Arzel\`a--Ascoli-type theorem for immersions with uniform $C^l$ bounds, see e.g. \cite{Del00}, \cite{Breu15}, or \cite[Proposition 8.2]{Cho21notes}, shows the existence of some limiting $\widetilde \Sigma_\infty^6\overset{\widetilde i_\infty}{\hookrightarrow} \R^7$, which is a complete, connected, smoothly immersed, two-sided stable minimal hypersurface, with $|B_R^{\widetilde \Sigma_\infty}(p)|\leq (1/4)^{-6}\Lambda R^6$ for some $p\in \widetilde \Sigma_\infty$ (with $i_\infty(p)=0$ and $|\mathrm{II}_{\widetilde \Sigma_\infty}(p)|=1$) and every $R>0$. On the other hand, \Cref{thm:SSYR7} shows that $\widetilde \Sigma_\infty$ is a hyperplane, thus $|\mathrm{II}_{\widetilde \Sigma_\infty}(p)|=0$, a contradiction.
\end{proof}


\addcontentsline{toc}{section}{\large References}
\printbibliography

\end{document}